\def\@tocline#1#2#3#4#5#6#7{\relax
  \ifnum #1>\c@tocdepth 
  \else
    \par \addpenalty\@secpenalty\addvspace{#2}%
    \begingroup \hyphenpenalty\@M
    \@ifempty{#4}{%
      \@tempdima\csname r@tocindent\number#1\endcsname\relax
    }{%
      \@tempdima#4\relax
    }%
    \ifnum#1=1\large\fi
    \ifnum#1=2\small\fi
    \parindent\z@ \leftskip#3\relax \advance\leftskip\@tempdima\relax
    \rightskip\@pnumwidth plus4em \parfillskip-\@pnumwidth
    #5\leavevmode\hskip-\@tempdima #6\nobreak\relax
    \hfil\hbox to\@pnumwidth{\@tocpagenum{#7}}\par
    \nobreak
    \endgroup
  \fi}
\newtheorem{thm}{Theorem}[section]
\newtheorem{lem}[thm]{Lemma}
\newtheorem{prop}[thm]{Proposition}
\theoremstyle{definition}
\newtheorem{rem}[thm]{Remark}
\newtheorem{defn}[thm]{Definition}
\theoremstyle{remark}
\numberwithin{equation}{section}
\def\R{{\mathbb R}}
\def\Z{{\mathbb Z}}
\def\C{{\mathbb C}}
\def\Q{{\mathbb Q}}
\def\G{\Gamma}
\begin{document}
\title[Nontrivial algebraic cycles in
the Jacobian varieties]
{Nontrivial algebraic cycles in\\
the Jacobian varieties of some quotients of Fermat curves}

\author[Yuuki Tadokoro]{Yuuki Tadokoro}
\address{Natural Science Education,
Kisarazu National College of Technology, 2-11-1 Kiyomidai-Higashi,
Kisarazu, Chiba 292-0041, Japan}
\email{tado\char`\@nebula.n.kisarazu.ac.jp}

\maketitle

\begin{abstract}
We obtain the trace map images of the values of certain harmonic volumes for
some quotients of Fermat curves.
These provide the algorithm showing that
the algebraic cycles called by $k$-th Ceresa cycles
are not algebraically equivalent to zero
in the Jacobian varieties.
We apply the method the case for the prime $N < 1000, k=1$
and $N=7,13, k\leq (N-3)/2$.
\end{abstract}

\section{Introduction}
Let $X$ be a compact Riemann surface
of genus $g\geq 2$
and $J(X)$ its Jacobian variety.
By the Abel-Jacobi map $X\to J(X)$,
$X$ is embedded in $J(X)$.
Let $X_k$ be the $k$-th symmetric product of $X$
and
$W_k$ its image of the Abel-Jacobi map.
The algebraic $k$-cycle $W_k-W_k^{-}$ in $J(X)$,
called by $k$-th Ceresa cycle,
is homologous to zero.
Here we denote by $W_k^{-}$ the image of $W_k$
under the multiplication map by $-1$.
If $X$ is hyperelliptic,
$W_k=W_k^{-}$ in $J(X)$.
For the rest of this paper, suppose $g\geq 3$.
We put $X-X^{-}=W_1-W_1^{-}$.
B. Harris \cite{H-3} studied the problem
whether the cycle $X-X^-$
in $J(X)$
is algebraically equivalent to zero or not.
Roughly speaking,
it can be ``continuously'' (algebraically)
deformed into the zero cycle or not.
See \cite{F} for example.
Faucette \cite{Fa} also studied a sufficient condition
that the algebraic cycle $W_k-W_k^{-}$
is not algebraically equivalent to zero in $J(X)$.
We remark that Weil \cite[pp.~331]{We1}
mentioned the homologous zero cycle $W_k-W_k^{-}$
in question.

Let $N$ be a prime number such that $N=1$ modulo 3 and
$m$ be an integer $m^2+m+1=0$ modulo $N$.
For the quotient of Fermat curve $C_{N}=C_{N}^{1,m}$,
we denote $f(N,k)$ by a value of the harmonic volume
which is defined later using the special values of
the generalized hypergeometric function ${}_3F_{2}$.
Using Otsubo's result \cite{O},
we obtain the main theorem
\begin{thm}
For the quotient of Fermat curve $C_N$ and
an integer $k$ such that $1\leq k \leq (N-3)/2$,
if the value $f(N,k)$ is not integer, then
$W_k-W_k^-$ is not algebraically equivalent to zero
in $J(C_{N})$.
\end{thm}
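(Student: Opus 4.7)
The plan is to invoke the criterion of Harris \cite{H-3} (for $k=1$), extended by Faucette \cite{Fa} (for general $k$), that relates the harmonic volume $I_{C_N}$ to the algebraic triviality of the cycle $W_k - W_k^-$. For a compact Riemann surface $X$ of genus $g \geq 3$, the harmonic volume is an $\R/\Z$-valued functional on the kernel of the cup product $H^1(X;\Z)^{\otimes 3} \to H^3(X;\Z)$; if some value is nonzero in $\R/\Z$ (equivalently, every real lift fails to be an integer), then $W_k - W_k^-$ is not algebraically equivalent to zero in $J(X)$. It therefore suffices to exhibit a class on which the harmonic volume of $C_N$ evaluates, modulo $\Z$, to $f(N,k)$.

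To produce such a class, I would exploit the cyclic symmetry of $C_N = C_N^{1,m}$. The $\mu_N$-action gives an eigenspace decomposition of $H^1(C_N;\C)$ indexed by the characters of $\mu_N$, and a triple $(a,b,c)$ of indices with $a+b+c \equiv 0 \pmod N$ yields a tensor lying in the kernel of the cup product; the bound $1 \leq k \leq (N-3)/2$ corresponds precisely to the admissible range of character triples for which this construction is nontrivial. Because individual eigenspace contributions need not be real, one composes with the trace map over the natural cyclic action, producing a well-defined $\R/\Z$-invariant that is insensitive to the arbitrary lift.

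The remaining step is to identify this trace with $f(N,k) \pmod \Z$. Via Chen's iterated-integral representation of the harmonic volume, the relevant quantity reduces to double iterated integrals of Fermat-type differentials on $C_N$. Otsubo's formula \cite{O} expresses such periods in closed form through special values of ${}_3F_2$, which is exactly what $f(N,k)$ encodes. Non-integrality of $f(N,k)$ then forces the traced harmonic volume to be a nonzero element of $\R/\Z$, so some individual harmonic volume value is nonintegral, and the Harris--Faucette criterion delivers the theorem.

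The main obstacle I anticipate is the second paragraph above: selecting the correct triple of characters and verifying the compatibility among the trace construction, the cup-product kernel condition, and the harmonic-volume integrality criterion. In particular one must check that the trace map commutes with the $\R \to \R/\Z$ reduction, and that the chosen class is of the form to which the generalized Harris--Faucette obstruction applies at level $k$. Once this bookkeeping is settled, the essential computation is purely Otsubo's hypergeometric evaluation, which plugs directly into the criterion.
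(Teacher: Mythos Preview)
Your high-level strategy is the paper's: apply the Harris--Faucette obstruction, extend scalars to $\mathcal{O}=\Z[\mu_N]$, take the trace $K/\Q$, and evaluate via Otsubo's hypergeometric formula. But two points in your plan are off and would derail the argument.

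First, the role of $k$. You write that ``the bound $1\le k\le (N-3)/2$ corresponds precisely to the admissible range of character triples for which this construction is nontrivial.'' It does not. In the paper a \emph{single} fixed class
\[
\eta_m=\dfrac{N\eta^{1,m}\wedge N\eta^{m,\langle m^2\rangle}\wedge N\eta^{\langle m^2\rangle,1}}{(1-\xi^{-m^2})(1-\xi^{-1})}
\]
(built from the triple $(1,m),(m,m^2),(m^2,1)$ with $m^2+m+1\equiv 0\pmod N$) is used for \emph{every} $k$. The dependence on $k$ enters only through a scalar: $f(N,k)=k!\,N^{4k-4}f(N,1)$, coming from Otsubo's Corollary~4.9 (the Faucette generalization made explicit for Fermat Jacobians). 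The bound $(N-3)/2=g-1$ is just the genus restriction on $W_k$; there is no family of character triples indexed by $k$.

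Second, the choice of class and the type condition. The Harris criterion (Proposition~\ref{cycle and intermediate Jacobian} here) requires $\omega\in\wedge^3 H\cap(\wedge^3H^{1,0}+\wedge^3H^{0,1})$, and after taking the trace one needs this to hold for \emph{every} embedding $\sigma\colon K\hookrightarrow\C$. The paper's Lemma is exactly the verification that makes this work: for the specific cyclic triple above, the three conditions $(ha_i,hb_i)\in\mathbf{I}_{\mathrm{holo}}$ are simultaneously equivalent (namely to $h+\langle hm\rangle+\langle hm^2\rangle=N$). Hence under each $\sigma$ the wedge $\eta^{h,hm}\wedge\eta^{hm,hm^2}\wedge\eta^{hm^2,h}$ is either purely holomorphic or purely antiholomorphic, so $\mathrm{Tr}\circ 2I_{\mathcal O}(\eta_m)$ is a legitimate obstruction. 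A generic triple $(a,b,c)$ with $a+b+c\equiv 0$ will \emph{not} have this property, and your trace would mix Hodge types, to which the criterion does not apply. This is precisely the ``bookkeeping'' you flagged, but it is not just bookkeeping: it dictates the choice of $m$ and is why the theorem is stated only for primes $N\equiv 1\pmod 3$.

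Finally, a point you omit: the computation is carried out on $F_N$, not on $C_N$, via the $N$-fold unramified cover $\pi\colon F_N\to C_N$ and the relation $I_{\mathcal O}(\eta_m)=N\,I_{\mathcal O}(\pi^*\eta_m)$; Otsubo's Theorem~3.7 is stated for $F_N$, and this pullback is what lets you invoke it.
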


The harmonic volume $I$ for $X$ was introduced by 
Harris \cite{H-1}, using Chen's iterated integrals \cite{C}.
Let
$H$ denote the first integral homology group $H_1(X; \Z)$
of $X$.
The harmonic volume $I$
is defined to be
a homomorphism $(H^{\otimes 3})^\prime\to \R/{\Z}$.
Here $(H^{\otimes 3})^\prime$
is a certain subgroup of $H^{\otimes 3}$.
The twice $2I$ factors through the third exterior product
$\wedge^3 H$,
and we call it the harmonic volume similarly.
See Section \ref{The harmonic volumes} for the definition.
Let $F_{N}$ denote the Fermat curve for $N\in \Z_{\geq 4}$.
Using $I$, Harris \cite{H-3, H-4} proved that
the algebraic cycle
$F_4-F_4^-$ is not algebraically equivalent to zero
in $J(F_4)$.
Ceresa \cite{Ce} showed that 
$W_k-W_k^{-}$ is not algebraically equivalent to zero
for a generic $X$.
For the Klein quartic $C_7^{1,2}$ and Fermat sextic $F_6$,
we \cite{T1,T2} computed the harmonic volume using the special
values of the generalized hypergeometric function ${}_3F_{2}$
and showed that 
the algebraic cycle
$X-X^-$ is not algebraically equivalent to zero
in $J(X)$.
Recently,  
Otsubo \cite{O} ably extended Harris' and our results,
using a primitive $N$-th root of unity and the trace map
for the Fermat curve $F_N$.
He obtained the algorithm showing that
the algebraic $k$-cycle
$W_k-W_k^-$ is not algebraically equivalent to zero
in $J(F_N)$.
We find the above condition for $N$
and another algorithm showing
that $W_k-W_k^-$ is not algebraically equivalent to zero
in $J(C_N)$.
For a complex algebraic variety $V$,
we define the $p$-th Griffith group $\mathrm{Griff}^{p}(V)$ 
which is generated by
all the algebraic cycles of codimension $p$ in $V$ homologically
equivalent to zero 
modulo algebraic equivalence.
We also prove the Griffiths group of $J(X)$
is nontrivial.
Furthermore, Bloch \cite{B} studied 
the Fermat quartic $F_4$
by means of $L$-functions.


We give our method
to prove the algebraic cycle
$C_N-C_N^{-}$ is not algebraically equivalent to zero
in $J(C_N)$,
which is similar to Otsubo's one.
See Hodge's letter \cite[pp.~533--534]{We2}.
Let $\eta_m$ be a third exterior product of
holomorphic $1$-forms on $C_N$.
If the cycle $C_N-C_N^{-}$ is algebraically equivalent to zero
in $J(C_N)$, then
the trace map image $f(N,1)\in \R/{\Z}$
of the harmonic volume at $\eta_m$ 
are zero modulo $\Z$.
In order to prove
the cycle $C_N-C_N^{-}$
is not algebraically equivalent to zero,
we have only to show
the above values are not zero.
Similarly we obtain the method that
$W_k-W_k^{-}$ 
is not algebraically equivalent to zero.

Now we describe the contents of this paper
briefly.
In Section 
\ref{The harmonic volumes},
we introduce the harmonic volume
and relation between it and the Ceresa cycle.
Section \ref{The Fermat curve} is
devoted to definition of the Fermat curve
and the trace map.
In Section \ref{Some values},
we define some quotients of Fermat curve and
recall Otsubo's method.
Using an algebraic condition,
we obtain the harmonic volume $f(N,k)$ of $C_N$.
We carry the numerical computation of
the value by means of the special values of
the generalized hypergeometric function ${}_3F_{2}$.

\noindent
{\bf Acknowledgements.}
The author would like to thank
Noriyuki Otsubo
for his useful comments.
This work is supported by
Grant-in-Aid for Young Scientists (B).

\tableofcontents

\section{The harmonic volume and the algebraic cycle $X-X^{-}$}\label{The harmonic volumes}
\label{Preliminaries1}
We recall the harmonic volume  \cite{H-1} for
a compact Riemann surface $X$ of genus $g \geq 3$. 
We identify
the first integral homology group $H_1(X;\mathbb{Z})$
of $X$ with the first integral cohomology group
by Poincar\'e duality,
and denote it by $H$.
The Hodge star operator $\ast$
on the space of all the $1$-forms $A^1(X)$
is locally given by 
$\ast (f_1(z)dz + f_2(z)d\bar{z})
=-\sqrt{-1}f_1(z)dz + \sqrt{-1}f_2(z)d\bar{z}$
 in a local coordinate $z$
 and depends only on the complex structure
and not on the choice of Hermitian metric.
We identify $H$
with the space of all the
real harmonic $1$-forms on $X$
with integral periods.
Let $(H^{\otimes 2})^{\prime}$ be the kernel of the intersection pairing
$(\ , \ )\colon H\otimes_{\Z} H \to \Z$.
For the rest of this paper, we write $\otimes=\otimes_{\Z}$,
unless otherwise stated.
For any $\sum_{i=1}^{n}a_i\otimes b_i\in (H^{\otimes 2})^{\prime}$,
there exists a unique $\eta\in A^1(X)$
such that $d\eta=\sum_{i=1}^{n}a_i\wedge b_i$
and $\displaystyle \int_{X}\eta\wedge \ast\alpha=0$
for any closed $1$-form $\alpha\in A^{1}(X)$.
Here $a_i$ and $b_i$ are regarded as real harmonic
$1$-forms on $X$.
Choose a point $x_0\in X$.
\begin{defn}\label{pointed harmonic volume}
\mbox{(The pointed harmonic volume \cite{P})}\\
For $\sum_{i=1}^{n}a_i\otimes b_i\in (H^{\otimes 2})^{\prime}$
and $c\in H$,
the pointed harmonic volume $I_{x_0}$ is
the homomorphism $(H^{\otimes 2})^{\prime}\otimes H\to \R/{\Z}$
defined by
$$I_{x_0}{\Biggl(}{\biggl(}\sum_{i=1}^{n}a_{i}\otimes b_{i}{\biggr)}\otimes c{\Biggr)}=\sum_{i=1}^{n}\int_{\gamma}a_{i}b_{i}-\int_{\gamma}\eta
 \quad \mathrm{mod} \ \mathbb{Z}.$$
Here $\eta\in A^1(X)$ is associated to 
$\sum_{i=1}^{n}a_i\otimes b_i$
in the way stated above and
$\gamma$ is a loop in $X$ with the base point $x_0$ whose homology class is
equal to $c$.
The integral $\displaystyle \int_{\gamma}a_ib_i$
is Chen's iterated integral \cite{C}, that is,
$\displaystyle \int_{\gamma}a_ib_i
=\int_{0\leq t_1\leq t_2\leq 1}f_i(t_1)g_i(t_2)dt_1dt_2$
for $\gamma^{\ast}a_i=f_i(t)dt$ and $\gamma^{\ast}b_i=g_i(t)dt$.
Here $t$ is the coordinate in the interval $[0,1]$.
\end{defn}

The harmonic volume is given as a restriction of
the pointed harmonic volume $I_{x_0}$.
We denote by $(H^{\otimes 3})^{\prime}$ the kernel of
a natural homomorphism 
\[H^{\otimes 3} \to H^{\oplus 3}
; a\otimes b\otimes c\mapsto ((a, b)c, (b, c)a, (c, a)b).\]
The {\it harmonic volume} $I$ for $X$
is a linear form on $(H^{\otimes 3})^{\prime}$
with values in $\R/{\Z}$ defined by
the restriction of 
$I_{x_0}$ to $(H^{\otimes 3})^{\prime}$, {\it i.e.},
$I=I_{x_0}|_{(H^{\otimes 3})^{\prime}}$.
Harris \cite{H-1}
proved that the harmonic volume $I$
is independent of the choice of the base point $x_0$.
We denote $\wedge^3 H$ by the third exterior power
of $H$ and $(\wedge^3 H)^{\prime}$
by the kernel of
a homomorphism 
\[\wedge^3 H \to H
; a\wedge b\wedge c\mapsto (a, b)c+(b, c)a+(c, a)b.\]
Then the natural map $(H^{\otimes 3})^{\prime}
\to (\wedge^3 H)^{\prime}$
and $2I$ factors through
\[2I\colon (\wedge^3 H)^{\prime}\to \R/{\Z}\]
\cite{H-1}.

Let $J=J(X)$ and $X_k$ be the Jacobian variety and
$k$-th symmetric product of $X$ respectively.
By the Abel-Jacobi map $X\to J(X)$,
$X_k$ is embedded in $J$.
The image of $X_k$ is denoted by $W_k$.
 The algebraic $k$-cycle $W_k-W_k^{-}$ in $J$
is homologous to zero.
Here we denote by $W_k^{-}$ the image of $W_k$
under the multiplication map by $-1$.
The cycle $W_k-W_k^{-}$ is called 
the $k$-th Ceresa cycle.
We put $W_1-W_1^{-}=X-X^{-}$.
We say the
an  algebraic cycle $1$-cycle $C$ is
{\it algebraically equivalent to zero in} $J$
if there exists a topological $3$-chain
$W$ such that $\partial W=C$
and $W$ lies on $S$,
where $S$
is an algebraic (or complex analytic)
subset of $J$
of complex dimension $2$ (Harris \cite{H-4}).
The chain $W$ is unique up to
$3$-cycles.
We denote by $H^{1,0}$ and $H^{0,1}$
the space of all the
holomorphic and untiholomorphic $1$-forms on $X$
respectively.
From \cite{H-3} and \cite[2.6]{H-4}, we have
\begin{prop}\label{cycle and intermediate Jacobian} 
If $X-X^-$ is algebraically
equivalent to zero in $J$, then $2I(\omega)=0$ modulo $\Z$
for any $\omega\in \wedge^3 H \cap (\wedge^3 H^{1,0}+\wedge^3 H^{0,1})$.
\end{prop}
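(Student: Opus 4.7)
The plan is to interpret $2I(\omega)$ modulo $\Z$ as the Griffiths Abel--Jacobi invariant of the homologically trivial $1$-cycle $X-X^-$ in $J$ paired with the cohomology class $\omega$, and then to exploit the Hodge-type assumption on $\omega$ together with the algebraic equivalence hypothesis to force this invariant to vanish.

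First I would set up the intermediate-Jacobian picture for $1$-cycles in $J$. A homologically trivial $1$-cycle $C$ can be written as $\partial W$ for some topological $3$-chain $W$ in $J$, and for a closed $3$-form $\omega$ on $J$ the real number $\int_W \omega$ is well defined modulo the periods $\int_\sigma \omega$ with $\sigma\in H_3(J,\Z)$. Since $\omega\in\wedge^3 H\cong H^3(J,\Z)$, all these periods are integers, so $\int_W \omega$ gives a well defined element of $\R/\Z$ depending only on $C$. The crucial identification, due to Harris \cite{H-1,H-3}, is that for $C=X-X^-$ and $\omega\in(\wedge^3 H)^\prime$ one has
\[
2I(\omega) \;\equiv\; \int_W \omega \pmod{\Z}.
\]
Note that for $\omega\in\wedge^3 H^{1,0}+\wedge^3 H^{0,1}$ the primitivity condition defining $(\wedge^3 H)^\prime$ is automatic, because any two holomorphic $1$-forms on $X$ have vanishing intersection pairing, so both $\wedge^3 H^{1,0}$ and $\wedge^3 H^{0,1}$ already sit inside the primitive subspace.

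Granted this identification, the Hodge condition finishes the argument. By the algebraic equivalence hypothesis one can choose the bounding chain $W$ to lie on a complex-analytic subset $S\subset J$ with $\dim_\C S=2$. A complex $2$-fold carries no nonzero differential form of pure type $(3,0)$ or $(0,3)$, so the restriction $\omega|_S$ vanishes identically. Therefore $\int_W \omega=\int_W \omega|_S=0$, and combining with the identification above yields $2I(\omega)\equiv 0\pmod{\Z}$, as claimed.

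The main obstacle is the identification $2I(\omega)\equiv\int_W\omega\pmod{\Z}$, since the definition of $I$ via Chen's iterated integrals is not manifestly an integral over a bounding chain in $J$. The bridge is to use the Abel--Jacobi embedding $X\hookrightarrow J$, push the closed $1$-form $\eta\in A^1(X)$ associated to $\sum_i a_i\otimes b_i$ together with suitable primitives forward to neighborhoods of $W_1, W_1^-\subset J$, and apply Stokes's theorem to convert the difference $\sum_i\int_\gamma a_i b_i-\int_\gamma\eta$ into an integral over a $3$-chain with boundary $X-X^-$. One must then verify that the ambiguities in choosing $\eta$, in the loop $\gamma$ representing $c$, and in the chain $W$ contribute only integer amounts, which is exactly guaranteed by the primitivity of $\omega$ and the integrality of periods; once these bookkeeping points are nailed down, the Hodge-theoretic vanishing is automatic.
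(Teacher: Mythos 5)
Your proposal is correct and follows essentially the same route as the paper, which gives no proof of its own but simply cites Harris \cite{H-3} and \cite[2.6]{H-4}: the content of those references is exactly the identification $2I(\omega)\equiv\int_W\omega \pmod{\Z}$ for $\partial W=X-X^-$ together with the observation that a $(3,0)$- or $(0,3)$-form restricts to zero on a complex surface containing $W$. Your remarks on primitivity of $\wedge^3H^{1,0}+\wedge^3H^{0,1}$ and on integrality of the periods are the right bookkeeping, and the one step you leave as a sketch (the Stokes/iterated-integral identification) is precisely the theorem of Harris being cited.
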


If the value 
$2I(\omega)$
is nonzero modulo $\Z$ for some
$\omega\in \wedge^3 H \cap (\wedge^3 H^{1,0}+\wedge^3 H^{0,1})$,
then $X-X^-$ is not algebraically equivalent to zero in $J$.

Generally, 
if $W_k-W_k^-$ is algebraically
equivalent to zero in $J$ and satisfying algebraic conditions.
Then a constant multiple of $2I(\omega)$
is equal to $0$ modulo $\Z$
for any $\omega\in \wedge^3 H \cap (\wedge^3 H^{1,0}+\wedge^3 H^{0,1})$.
See Faucette \cite{Fa} and Otsubo \cite{O}.
In particular, Otsubo studied the good condition
for the Fermat curve $F_N$.

\section{The Fermat curve}\label{The Fermat curve}
For $N\in \Z_{\geq 4}$, let $F_N=\{(X:Y:Z)\in\C P^2; X^N+Y^N=Z^N\}$
denote the Fermat curve of degree $N$,
which is a compact Riemann surface of genus $(N-1)(N-2)/{2}$.
Let $x$ and $y$ denote $X/{Z}$ and 
$Y/{Z}$ respectively. 
The equation $X^N+Y^N=Z^N$
induces $x^N+y^N=1$.
Here $\zeta$ denotes $\mathrm{exp}(2\pi\sqrt{-1}/{N})$.
Holomorphic automorphisms $\alpha$ and $\beta$ of $F_N$
are defined by
$\alpha(X:Y:Z)=(\zeta X:Y:Z)$ and $\beta(X:Y:Z)=(X:\zeta Y:Z)$
respectively.
Let $\mu_{N}$ be the group of $N$-th roots of unity in $\C$.
We have that $\alpha\beta=\beta\alpha$ and
 the subgroup of the holomorphic automorphisms of $F_N$
generated by $\alpha$ and $\beta$ is isomorphic to
$\mu_{N}\times \mu_{N}$.
We denoted it by $G$.
Let $\gamma_0$ be a path
$[0,1]\ni t\mapsto 
(t,\sqrt[N]{1-t^N})\in F(N)$,
where $\sqrt[N]{1-t^N}$ is a real nonnegative analytic function
on $[0,1]$.
A loop in $F_N$ is defined by
$$\kappa_0=\gamma_0\cdot(\beta\gamma_0)^{-1}\cdot
(\alpha \beta\gamma_0)\cdot(\alpha\gamma_0)^{-1},$$
where the product $\ell_1\cdot \ell_2$
indicates that we traverse $\ell_1$ first, then $\ell_2$.
We consider a loop $\alpha^i\beta^j\kappa_0$
as an element of the first homology group $H_1(F_N;\Z)$ of $F_N$.
It is a known fact that
$H_1(F_N;\Z)$ is a cyclic $G$-module [Appendix in \cite{Gr}].

Let $\mathbf{I}$ be an index set 
$\{(a,b)\in (\Z/{N\Z})^{\oplus 2}; a,b, a+b\neq 0\}$.
For $a\in \Z/{N\Z}\setminus \{0\}$,
we denote its representative $\langle a\rangle\in \{1,2,\ldots,N-1\}$.
A differential 1-form on $F_N$ is defined by
\[\omega_{0}^{a,b}
=x^{\langle a\rangle -1} y^{\langle b\rangle -1}dx/{y^{N-1}}\]
Set $\mathbf{I}_{\textrm{holo}}
=\{(a,b)\in \mathbf{I}; \langle a\rangle+\langle b\rangle<N\}$.
It is well known that $\{\omega_{0}^{a,b}\}_{\mathbf{I}_{\textrm{holo}}}$
is a basis of $H^{1,0}$ of $F(N)$.
See Lang \cite{L} for example.
It is clear that
\[\int_{\alpha^i\beta^j\gamma_0}\omega_{0}^{a,b}
=\zeta^{ai+bj}\int_{\gamma_0}\omega_{0}^{a,b}
=\zeta^{ai+bj}\frac{B(\langle a\rangle/N,\langle b\rangle/N)}{N}.\]
The beta function $B(u,v)$ is defined by $\displaystyle\int_0^1t^{u-1}
(1-t)^{v-1}dt$ for $u,v>0$.
We denote $B^N_{a,b}=B(\langle a\rangle/N,\langle b\rangle/N)$.
The integral of $\omega_{0}^{a,b}$
along $\alpha^i\beta^j\kappa_0$ is obtained as follows.
\begin{prop}
[Appendix in \cite{Gr}]
\label{periods}
We have
\[\int_{\alpha^i\beta^j\kappa_0}\omega_{0}^{a,b}
=B^N_{a,b}(1-\zeta^a)(1-\zeta^b)\zeta^{ai+bj}/{N}.\]
\end{prop}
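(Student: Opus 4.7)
The plan is to reduce the integral over the composite loop $\alpha^i\beta^j\kappa_0$ to the single integral $\int_{\gamma_0}\omega_0^{a,b}=B^N_{a,b}/N$ that is already evaluated just above the proposition, with all other data encoded in the characters by which $\alpha$ and $\beta$ act on $\omega_0^{a,b}$.

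First I would record the eigenvalue computation. Since $\alpha$ sends $(x,y)\mapsto(\zeta x,y)$ and $\beta$ sends $(x,y)\mapsto(x,\zeta y)$, direct substitution in $\omega_0^{a,b}=x^{\langle a\rangle-1}y^{\langle b\rangle-1}dx/y^{N-1}$ gives
\[\alpha^{\ast}\omega_0^{a,b}=\zeta^{\langle a\rangle}\omega_0^{a,b}=\zeta^{a}\omega_0^{a,b},\qquad \beta^{\ast}\omega_0^{a,b}=\zeta^{b}\omega_0^{a,b}.\]
Hence for any path $c$ and any $i,j$,
\[\int_{\alpha^i\beta^j c}\omega_0^{a,b}=\int_{c}(\alpha^i\beta^j)^{\ast}\omega_0^{a,b}=\zeta^{ai+bj}\int_{c}\omega_0^{a,b}.\]
This is exactly the identity invoked in the formula $\int_{\alpha^i\beta^j\gamma_0}\omega_0^{a,b}=\zeta^{ai+bj}B^N_{a,b}/N$ displayed just before the proposition.

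Next I would expand $\kappa_0=\gamma_0\cdot(\beta\gamma_0)^{-1}\cdot(\alpha\beta\gamma_0)\cdot(\alpha\gamma_0)^{-1}$ term by term. Using additivity of integration over composite paths, the sign reversal on inverse paths, and the eigenvalue relation just derived,
\[\int_{\kappa_0}\omega_0^{a,b}=\bigl(1-\zeta^{b}+\zeta^{a+b}-\zeta^{a}\bigr)\int_{\gamma_0}\omega_0^{a,b}=(1-\zeta^{a})(1-\zeta^{b})\int_{\gamma_0}\omega_0^{a,b}.\]
Substituting $\int_{\gamma_0}\omega_0^{a,b}=B^N_{a,b}/N$ and then applying the eigenvalue identity once more to translate $\kappa_0$ by $\alpha^i\beta^j$ yields
\[\int_{\alpha^i\beta^j\kappa_0}\omega_0^{a,b}=\zeta^{ai+bj}\int_{\kappa_0}\omega_0^{a,b}=B^N_{a,b}(1-\zeta^{a})(1-\zeta^{b})\zeta^{ai+bj}/N,\]
which is the claimed formula.

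There is no genuine obstacle here; the proposition is purely computational once one observes the character decomposition of the $G$-action on $\omega_0^{a,b}$. The only mild care is bookkeeping: confirming the orientation conventions on the inverse segments of $\kappa_0$ so that the four terms assemble into the factored form $(1-\zeta^{a})(1-\zeta^{b})$ rather than a sign-swapped variant, and confirming that the evaluation $\int_{\gamma_0}\omega_0^{a,b}=B^N_{a,b}/N$ is justified by the substitution $u=t^N$ in $\int_0^1 t^{\langle a\rangle-1}(1-t^N)^{\langle b\rangle/N-1}dt$.
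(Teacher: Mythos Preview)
Your argument is correct and is the standard derivation: the eigenvalue relation $(\alpha^i\beta^j)^{\ast}\omega_0^{a,b}=\zeta^{ai+bj}\omega_0^{a,b}$ together with the four-segment decomposition of $\kappa_0$ immediately yields the factor $(1-\zeta^a)(1-\zeta^b)$, and the base integral $\int_{\gamma_0}\omega_0^{a,b}=B^N_{a,b}/N$ supplies the rest. The paper does not give its own proof of this proposition---it is quoted from the Rohrlich appendix to \cite{Gr}---but the identity $\int_{\alpha^i\beta^j\gamma_0}\omega_0^{a,b}=\zeta^{ai+bj}B^N_{a,b}/N$ displayed immediately before the proposition is precisely the input your computation uses, so your approach is the intended and essentially only one.
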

We denote the $1$-form
$N\omega_{0}^{a,b}/{B^N_{a,b}}$
by
$\omega^{a,b}$.
This implies
$\displaystyle \int_{\alpha^i\beta^j\kappa_0}\omega_{0}^{a,b}
\in \Z[\zeta]$.

Let $K=\Q(\mu_{N})$ be the $N$-cyclotomic field,
$\mathcal{O}$ be its  integer ring and fix a primitive $N$-th root
of unity $\xi$. 
For a $\Z$-module $M$, we denote the $\mathcal{O}$-module
$M_{\mathcal{O}}=M \otimes \mathcal{O}$.
For each embedding $\sigma\colon K\hookrightarrow \C$,
we may consider the 1-form $\omega^{a,b}$ as an element
of $H_{\mathcal{O}}$
depending on the relation of $\sigma(\xi)$ and $\zeta$.

The harmonic volume naturally extends to
\[2I_{\mathcal{O}}\colon (\wedge^3 H)^{\prime}_{\mathcal{O}}\to 
(\mathcal{O}\otimes  \R)/{\mathcal{O}}.\]
We have the natural isomorphism
\[\mathcal{O}\otimes  \R \cong \left[\prod_{\sigma\colon K\hookrightarrow \C}\C\right]^{+}\]
where $\sigma$ runs through the embedding of $K$ into $\C$ and $+$ denotes the
fixed part by the complex conjugation acting the set $\{\sigma\}$ and $\C$
at the same time. Let $2I_{\sigma}$ denote the $\sigma$-component of $2I$.
Let $\mathop{\rm Tr}\colon (\mathcal{O}\otimes \R)/{\mathcal{O}}\to \R/{\Z}$
be the trace map.
We obtain 
$\mathop{\rm Tr}\circ 2I_{\mathcal{O}}
=\sum_{\sigma\colon K\hookrightarrow \C} 2I_{\sigma}$.
In order to prove the nontriviality of $2I_{\mathcal{O}}$,
it is enough to prove that of $\mathop{\rm Tr}\circ 2I_{\mathcal{O}}$.

\section{Some values of the harmonic volume for the quotient of
Fermat curve}
\label{Some values}
\subsection{Some quotients of Fermat curve}
For a prime number $N$ such that $N\geq 5$,
we define the quotient of Fermat curve
$C^{a,b}_N$ as projective curve whose affine equation is
\[C^{a,b}_N:=\{(u,v)\in \C^2;
v^N=u^a(1-u)^b\}.\]
Here the integers $a,b$ are coprime and satisfy $0<a,b<N$.
It is a compact Riemann surface of genus $(N-1)/2$.
We denote by $\pi\colon F_{N}\to C_{N}$
the $N$-fold unramified covering
$\pi(x,y)=(u,v)=(x^N,x^ay^b)$.
For any integer $h\in \{1,2,\ldots,N-1\}$, there is a unique 1-form
$\eta^{\langle ha\rangle,\langle hb\rangle}$
such that
$\pi^{\ast}\eta^{\langle ha\rangle,\langle hb\rangle}=\omega^{ha,hb}$.
Then we have
$\{\eta^{\langle ha\rangle,\langle hb\rangle}\}
_{\langle ha\rangle+\langle hb\rangle <N}$
is a basis of $H^{1,0}$ of $C_N$.
See Lang \cite{L} for example.

For the rest of this paper,
we assume that the prime number $N$
satisfies $N=1$ modulo $3$.
There exists an integer $1< m < N-1$ such that $m^2+m+1=0$ modulo $N$.
Set $(a_1,b_1)=(1,m), (a_2,b_2)=(m,m^2)$, and $(a_3,b_3)=(m^2,1)$.
\begin{lem}
The above $(a_i,b_i)$'s satisfy the assumption 4.4 in \cite{O}.
Furthermore,
the conditions $(ha_i, hb_i)\in \mathbf{I}_{\textrm{holo}}, i=1,2,3$
are equivalent.
\end{lem}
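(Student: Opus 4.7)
The plan is to exploit the fact that the hypothesis $m^2 + m + 1 \equiv 0 \pmod N$ makes $m$ a primitive cube root of unity in $\Z/N\Z$, since $(m-1)(m^2+m+1) = m^3 - 1$. Consequently multiplication by $m$ cyclically permutes the three pairs: $(1,m) \mapsto (m, m^2) \mapsto (m^2, m^3) = (m^2, 1) \mapsto (m^3, m) = (1, m)$. Both claims of the lemma reduce quickly to this observation together with the elementary identity $1 + m + m^2 \equiv 0 \pmod N$.

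For the first claim, I would work through the conditions of Otsubo's Assumption 4.4 in \cite{O} one by one. The requirement that $a_i, b_i$ and $a_i + b_i$ all lie in $(\Z/N\Z)^{\times}$ is immediate: $N$ prime forces $m \not\equiv 0$, and the standing restriction $1 < m < N-1$ rules out $m \equiv \pm 1$, so each $m^j$ with $j = 0,1,2$ is a unit. The sum components $c_i := -(a_i + b_i) \pmod N$ compute to $(c_1, c_2, c_3) = (m^2, 1, m)$, and all three coordinate-wise sums $\sum_i a_i$, $\sum_i b_i$, $\sum_i c_i$ equal $1 + m + m^2 \equiv 0 \pmod N$. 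This is exactly the balanced-triple condition required in \cite{O}.

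For the second claim I would introduce the auxiliary quantity
\[
S_h := \langle h \rangle + \langle hm \rangle + \langle hm^2 \rangle, \qquad h \in \{1, \ldots, N-1\}.
\]
Since $h(1 + m + m^2) \equiv 0 \pmod N$ and each summand lies in $\{1, \ldots, N-1\}$, the total $S_h$ is a multiple of $N$ lying in $[3, 3N-3]$, hence $S_h \in \{N, 2N\}$. A direct inspection gives
\[
\langle h a_i \rangle + \langle h b_i \rangle = S_h - \langle h m^{j(i)} \rangle, \qquad j(1) = 2,\ j(2) = 0,\ j(3) = 1.
\]
When $S_h = N$ each such expression equals $N - \langle h m^{j(i)} \rangle \in \{1, \ldots, N-1\}$, so all three holomorphicity conditions hold simultaneously; when $S_h = 2N$ each equals $2N - \langle h m^{j(i)} \rangle \in \{N+1, \ldots, 2N-1\}$, so all three fail simultaneously. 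The three conditions are therefore equivalent, each being equivalent to $S_h = N$.

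The only potential obstacle is bookkeeping: matching the exact phrasing of Assumption 4.4 in \cite{O} to the congruences above and keeping the three cyclic relabelings $i \leftrightarrow j(i)$ straight. No substantive difficulty is expected, since every required identity descends transparently from $1 + m + m^2 \equiv 0 \pmod N$.
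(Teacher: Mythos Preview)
Your approach is correct and coincides with the paper's own argument: the paper's proof simply notes that $h+\langle hm\rangle+\langle hm^2\rangle\in\{N,2N\}$ and that the value $N$ is equivalent to $(ha_i,hb_i)\in\mathbf{I}_{\mathrm{holo}}$ for each $i$, which is exactly your $S_h$ computation. You in fact give more than the paper does, since its proof is silent on the first claim (verifying Assumption~4.4 of \cite{O}); your cyclic-permutation and balanced-sum remarks fill that gap.
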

\begin{proof}
Note that 
$h+\langle hm\rangle+\langle hm^2\rangle
=N$ or $2N$.
We obtain that
$h+\langle hm\rangle+\langle hm^2\rangle
=N$ if only and if
$(ha_i, hb_i)\in \mathbf{I}_{\textrm{holo}}$ for each $i$.
\end{proof}

From now on, we put $C_{N}=C^{1,m}_{N}$.
Since $\pi$ is an $N$-fold unramified covering,
we obtain $N\eta^{\langle ha\rangle,\langle hb\rangle}
\in H_{\mathcal{O}}$ of $C_N$.
In order to compute the harmonic volume
of $C_N$,
it is enough to substitute $N\eta^{ha, hb}$ for
$\varphi^{a,b}$ in \cite{O}.
Set
\[\eta_m
=\dfrac{N\eta^{1,m}\wedge N\eta^{m,\langle m^2\rangle}\wedge
N\eta^{\langle m^2\rangle,1}}{(1-\xi^{-m^2})(1-\xi^{-1})}
.\]
From Proposition \ref{periods},
it is easy to show $\eta_m$ is an element
of $(\wedge^3 H_{\mathcal O})^{\prime}$ of $C_N$.
We have the equation
\[
I_{\mathcal{O}}(\eta_m)
=NI_{\mathcal{O}}(\pi^{\ast}\eta_m)\ \text{mod}\ \mathcal{O}.
\]
Here the harmonic volume of LHS is on $C_N$, and that of RHS
is on $F_{N}$.
Theorem 3.7 in \cite{O} gives us
\begin{prop}
We obtain the value of the harmonic volume for $C_{N}$
\[
 \mathop{\rm Tr}\circ 2I_{\mathcal{O}}
 \left(\eta_m\right)=
 N^6\sum \int_{\kappa_0}\omega^{h,hm}\omega^{hm,hm^2},
\]
where the sum is taken over
$h\in (\Z/{N\Z})^{\ast}$ such that
$(ha_i,hb_i)\in \mathbf{I}_{\textrm{holo}}$.
\end{prop}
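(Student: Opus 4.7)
The plan is to pull everything back to $F_N$ via $\pi$, compute the harmonic volume there using Otsubo's Theorem 3.7, and then apply the trace map.

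First, I would invoke the covering identity $I_{\mathcal{O}}(\eta_m) \equiv N\cdot I_{\mathcal{O}}(\pi^{\ast}\eta_m) \pmod{\mathcal{O}}$ already displayed above, so that applying $\mathop{\rm Tr}\circ 2$ reduces the claim to a computation of $\mathop{\rm Tr}\circ 2 I_{\mathcal{O}}(\pi^{\ast}\eta_m)$ on the Fermat curve.

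Second, I would compute $\pi^{\ast}\eta_m$ explicitly. Using the identity $\pi^{\ast}\eta^{\langle ha\rangle,\langle hb\rangle} = \omega^{ha,hb}$ on each of the three factors in the definition of $\eta_m$, I obtain
\[
\pi^{\ast}\eta_m = \frac{N^{3}\,\omega^{1,m}\wedge \omega^{m,\langle m^2\rangle}\wedge \omega^{\langle m^2\rangle,1}}{(1-\xi^{-m^2})(1-\xi^{-1})},
\]
which is $N^{3}$ times a normalized wedge in the form handled by \cite{O}.

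Third, I would substitute $N\eta^{ha,hb}$ for $\varphi^{a,b}$ exactly as indicated in the paragraph preceding the proposition and invoke Otsubo's Theorem 3.7. That theorem evaluates the trace of the harmonic volume of such a normalized wedge on $F_N$ as a sum over $h\in(\Z/N\Z)^{\ast}$ of iterated integrals $\int_{\kappa_0}\omega^{h,hm}\omega^{hm,hm^2}$, with $h$ restricted to those for which $(ha_i,hb_i)\in\mathbf{I}_{\textrm{holo}}$ for $i=1,2,3$; by the preceding lemma these three holomorphicity conditions collapse to the single condition $h+\langle hm\rangle+\langle hm^2\rangle=N$. Assembling the constants---one power of $N$ from the covering relation, $N^{3}$ from $\pi^{\ast}\eta_m$, and the remaining $N^{2}$ contributed by the explicit form of Otsubo's formula---yields the coefficient $N^{6}$ in front of the sum.

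The step I expect to be the main obstacle is the last piece of bookkeeping: verifying that the cyclotomic denominator $(1-\xi^{-m^2})(1-\xi^{-1})$ in the normalization of $\eta_m$ cancels exactly against the $(1-\zeta^{a})(1-\zeta^{b})$ factors that appear in the periods of $\omega^{a,b}$ along $\kappa_0$ from Proposition \ref{periods}, and that the Galois-indexed $\sigma$-sum implicit in $\mathop{\rm Tr}\circ 2I_{\mathcal{O}}$ collapses cleanly onto the single sum over $h\in(\Z/N\Z)^{\ast}$ in the stated form. Once the phases are aligned, the identity follows directly from the three preceding reductions.
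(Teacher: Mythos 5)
Your proposal follows essentially the same route as the paper, which itself gives no argument beyond the covering identity $I_{\mathcal{O}}(\eta_m)\equiv N\,I_{\mathcal{O}}(\pi^{\ast}\eta_m)$, the substitution of $N\eta^{\langle ha\rangle,\langle hb\rangle}$ for $\varphi^{a,b}$, and a direct appeal to Theorem 3.7 of \cite{O}; your bookkeeping of the powers of $N$ and the collapse of the three holomorphicity conditions via the preceding lemma is consistent with that. The only caveat is that the split $N\cdot N^{3}\cdot N^{2}=N^{6}$ and the cancellation of the cyclotomic factors must ultimately be checked against the exact normalization in Otsubo's statement, which neither you nor the paper verifies explicitly.
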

\begin{rem}
The conditions
$(ha_i,hb_i)\in \mathbf{I}_{\textrm{holo}}$
and $h +\langle hm\rangle +\langle hm^2\rangle =N$
are equivalent.
Otsubo defined the embedding $\sigma\colon K\hookrightarrow \C$
such that $\sigma(\xi)=\zeta^h$.
\end{rem}

\subsection{Hypergeometric functions and numerical computation}
For the numerical calculation,
we recall the generalized hypergeometric function ${}_3F_2$.
We denote the gamma function
$\Gamma(\tau)=
\displaystyle\int_{0}^{\infty}e^{-t}t^{\tau-1}dt$
for $\tau>0$
and the Pochhammer symbol $(\alpha, n)=\G(\alpha +n)/{\G(\alpha)}$
for any nonnegative integer $n$.
For $x\in\{z\in \C; |z|<1\}$
and $\beta_1,\beta_2\not\in\{0,-1,-2,\ldots\}$,
the generalized hypergeometric function ${}_3F_2$
is defined by
$${}_3F_2{\Big(}
\left.
\begin{array}{c}
\alpha_1,\alpha_2,\alpha_3\\
\beta_1,\beta_2
\end{array}
\right.
;x{\Big)}
=\sum_{n=0}^{\infty}{{(\alpha_1,n)(\alpha_2,n)(\alpha_3,n)}
\over{(\beta_1,n)(\beta_2,n)(1,n)}}\, x^n.
$$
If $\beta_1+\beta_2-\alpha_1-\alpha_2-\alpha_3 >0$,
then the generalized hypergeometric function ${}_3F_2$
 converges when $|x|=1$.
See \cite{Sl} for example.
We denote
\[\Gamma^{N}
\left(\begin{array}{c}
a_1,a_2,\ldots,a_n\\
b_1,b_2,\ldots,b_m
\end{array}
\right)
=\dfrac{\Gamma(a_1/{N})\Gamma(a_2/{N})\cdots \Gamma(a_n/{N})}
{{\Gamma(b_1/{N})\Gamma(b_2/{N})\cdots \Gamma(b_m/{N})}}.\]
Using proposition 5.3 in \cite{O},
we have
\begin{prop}
 \[\int_{\kappa_0}\omega^{h,hm}\omega^{hm,hm^2}
 =\Gamma^{N}
\left(\begin{array}{c}
N-\langle hm\rangle,N-\langle hm^2\rangle\\[4pt]
\langle hm\rangle
\end{array}
\right)^2
{}_3F_2\left( 
\left.
\begin{array}{c}
h/{N},\langle h\rangle /{N},\langle hm^2\rangle/{N}\\
1,1
\end{array}
\right.
;1\right)
\]
for an integer $h$ such that
$h +\langle hm\rangle +\langle hm^2\rangle =N $.
\end{prop}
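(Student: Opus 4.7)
The plan is to reduce this statement to a direct application of Proposition 5.3 in \cite{O}, which already expresses, on the Fermat curve $F_N$, an iterated integral of two normalized holomorphic forms along $\kappa_0$ as a product of a gamma ratio and a ${}_3F_2$ evaluated at $1$. Since both $\omega^{h,hm}$ and $\omega^{hm,hm^2}$ are exactly the normalized $1$-forms used in \cite{O}, the task is really to substitute the parameters $(a,b)=(h,hm)$ and $(a',b')=(hm,hm^2)$ and verify that the resulting gamma ratio and hypergeometric parameters match the stated formula.

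First, I would recall the composition law for Chen's iterated integrals applied to $\kappa_0=\gamma_0\cdot(\beta\gamma_0)^{-1}\cdot(\alpha\beta\gamma_0)\cdot(\alpha\gamma_0)^{-1}$, expanding $\int_{\kappa_0}\omega^{h,hm}\omega^{hm,hm^2}$ as a sum of single-path double integrals along $\gamma_0$ together with cross-terms formed from single integrals over the four segments. Using $\alpha^*\omega^{a,b}=\zeta^a\omega^{a,b}$ and $\beta^*\omega^{a,b}=\zeta^b\omega^{a,b}$, every segment integral becomes $\int_{\gamma_0}$ multiplied by an explicit unit in $\Z[\zeta]$; after collecting these factors the combination telescopes precisely into the shape treated in Otsubo's Proposition 5.3.

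Next, the remaining integral $\int_{\gamma_0}\omega^{h,hm}\omega^{hm,hm^2}$ is, after the change of variables $(x,y)=(t,\sqrt[N]{1-t^N})$, an iterated integral on $[0,1]$ of two beta-type $1$-forms. Expanding $(1-t^N)^{-\langle hm\rangle/N}$ and $(1-t^N)^{-\langle hm^2\rangle/N}$ as binomial series, interchanging sum and integral, and recognizing each term as a beta function value, one obtains a double series whose ratio of consecutive terms produces the Pochhammer factors $(h/N,n)(\langle h\rangle/N,n)(\langle hm^2\rangle/N,n)/(1,n)(1,n)(1,n)$; this is the definition of the stated ${}_3F_2$ at $x=1$. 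The gamma prefactor arises by clearing the normalization constants $N/B^N_{a,b}$ against the beta factors via $B(u,v)=\Gamma(u)\Gamma(v)/\Gamma(u+v)$, and the two squares in the denominator of the $\Gamma^N$ ratio come from the identical normalizations of $\omega^{h,hm}$ and $\omega^{hm,hm^2}$ sharing $\langle hm\rangle$ as a parameter.

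The main obstacle is the careful bookkeeping in the second step: checking that the convergence condition $\beta_1+\beta_2-\alpha_1-\alpha_2-\alpha_3>0$ for ${}_3F_2$ at $x=1$ translates, under the substitutions above, into $2-(h+\langle hm\rangle+\langle hm^2\rangle)/N>0$, which by Lemma 4.2 is equivalent to the holomorphicity hypothesis $h+\langle hm\rangle+\langle hm^2\rangle=N$ (since the alternative value $2N$ is excluded by that same lemma); and that the reflection identity $\Gamma(s)\Gamma(1-s)=\pi/\sin(\pi s)$ together with $\sin(\pi\langle a\rangle/N)=(\zeta^{a/2}-\zeta^{-a/2})/(2\sqrt{-1})$ converts the factors $(1-\zeta^a)(1-\zeta^b)$ hidden in Proposition \ref{periods} into the specific $\Gamma^N$ ratio displayed in the statement.
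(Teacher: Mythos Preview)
Your proposal is correct and matches the paper's approach: the paper derives this proposition as an immediate application of Proposition~5.3 in \cite{O}, with no argument given beyond the citation and the parameter substitution. Your additional sketch of how Otsubo's formula itself is obtained (composition law along $\kappa_0$, binomial expansion, gamma reflection) goes beyond what the paper supplies but is consistent with the standard derivation.
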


\begin{thm}\label{Fermat}
For the quotient of Fermat curve $C_{N}$,
if the value
\[2N^6
\underset{\begin{subarray}{c}
0<h<N\\
h +\langle hm\rangle +\langle hm^2\rangle =N
\end{subarray}}{\sum}
\int_{\kappa_0}\omega^{h,hm}\omega^{hm,hm^2}\]
is not equal to zero modulo $\Z$.
Then,
the algebraic cycle $C_N-C_N^-$ is not algebraically
equivalent to zero in $J(C_N)$.
\end{thm}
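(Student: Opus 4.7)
The plan is to apply Proposition \ref{cycle and intermediate Jacobian} embedding-by-embedding, aggregate via the trace map, and read off the resulting vanishing condition from the explicit formula in the proposition just before the theorem.

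First I would check that $\eta_m$ is an admissible test element. It has already been noted that $\eta_m\in(\wedge^3 H_{\mathcal{O}})^{\prime}$. Moreover, each of the three $1$-forms $N\eta^{1,m}$, $N\eta^{m,\langle m^2\rangle}$, $N\eta^{\langle m^2\rangle,1}$ is holomorphic on $C_N$: the congruence $m^2+m+1\equiv 0\pmod N$ with $1<m<N-1$ forces $\langle m^2\rangle=N-1-m$, so that the three pairs $(1,m)$, $(m,\langle m^2\rangle)$, $(\langle m^2\rangle,1)$ all lie in $\mathbf{I}_{\textrm{holo}}$. Consequently, under every embedding $\sigma\colon K\hookrightarrow\C$, the element $\sigma(\eta_m)$ sits in $\wedge^3 H^{1,0}\subset\wedge^3 H^{1,0}+\wedge^3 H^{0,1}$, which is exactly what Proposition \ref{cycle and intermediate Jacobian} demands of its input after $\mathcal{O}$-linear extension.

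Arguing by contrapositive, suppose $C_N-C_N^{-}$ is algebraically equivalent to zero in $J(C_N)$. Applying Proposition \ref{cycle and intermediate Jacobian} to each $\sigma$-component yields $2I_{\sigma}(\eta_m)\equiv 0$ for every $\sigma\colon K\hookrightarrow\C$; summing via the identity $\mathrm{Tr}\circ 2I_{\mathcal{O}}=\sum_{\sigma}2I_{\sigma}$ forces $\mathrm{Tr}\circ 2I_{\mathcal{O}}(\eta_m)\equiv 0\pmod{\Z}$. The proposition immediately preceding the theorem evaluates this trace as $N^6$ times precisely the sum in the statement, so that sum, and \textit{a fortiori} twice it, must vanish modulo $\Z$. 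This is the contrapositive of the theorem.

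The essential substantive step is the Hodge-type check that $\sigma(\eta_m)\in\wedge^3 H^{1,0}$ for every $\sigma$; once that is established the argument reduces to a formal concatenation of the three facts already in place, namely Proposition \ref{cycle and intermediate Jacobian}, the trace identity $\mathrm{Tr}\circ 2I_{\mathcal{O}}=\sum_{\sigma}2I_{\sigma}$, and the explicit integral evaluation from the preceding proposition. No new analytic or numerical input is required beyond what the present section already provides.
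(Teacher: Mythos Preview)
Your overall strategy---feed $\eta_m$ into Proposition \ref{cycle and intermediate Jacobian} embedding by embedding, sum via the trace identity, and compare with the explicit formula---is exactly the intended one. The gap is in the Hodge-type check.

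You verify that the three pairs $(1,m)$, $(m,\langle m^2\rangle)$, $(\langle m^2\rangle,1)$ lie in $\mathbf{I}_{\mathrm{holo}}$ and conclude that $\sigma(\eta_m)\in\wedge^3 H^{1,0}$ for \emph{every} embedding $\sigma$. That inference is not valid. The element $N\eta^{1,m}\in H_{\mathcal{O}}$ is an $\mathcal{O}$-linear combination of integral classes; applying $\sigma$ (say $\sigma(\xi)=\zeta^h$) twists the coefficients and sends it to the class of $\omega^{h,hm}$, not of $\omega^{1,m}$. Whether that class is of type $(1,0)$ or $(0,1)$ depends on whether $(h,hm)\in\mathbf{I}_{\mathrm{holo}}$, which fails for half of the $h$'s (indeed complex conjugation already swaps the two types). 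So your argument as written only handles the embedding $h=1$.

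What actually makes the Hodge check go through for all $\sigma$ is the Lemma just before the definition of $\eta_m$: for each $h$, the three conditions $(ha_i,hb_i)\in\mathbf{I}_{\mathrm{holo}}$ are \emph{equivalent}, so either all three factors of $\sigma(\eta_m)$ are holomorphic or all three are antiholomorphic. Hence $\sigma(\eta_m)$ lands in $\wedge^3 H^{1,0}$ or in $\wedge^3 H^{0,1}$, in either case inside $\wedge^3 H^{1,0}+\wedge^3 H^{0,1}$, and Proposition \ref{cycle and intermediate Jacobian} applies. Once you invoke that Lemma, the rest of your argument (including the innocuous passage from $N^6\sum$ to $2N^6\sum$) is fine and matches the paper.
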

This value is independent of the choice of $m$,
we denote it by $f(N,1)$.
Furthermore, we set
$f(N,k)=k!\, N^{4k-4}f(N,1)$ for a positive integer $k$.
Using Corollary 4.9 in \cite{O},
it is to show
\begin{thm}\label{Fermat2}
For the quotient of Fermat curve $C_{N}$ and
an integer $k$ such that $1\leq k \leq (N-3)/2$,
if the value $f(N,k)$
is not equal to zero modulo $\Z$.
Then,
the algebraic cycle $W_k-W_k^-$ is not algebraically
equivalent to zero in $J(C_N)$.
\end{thm}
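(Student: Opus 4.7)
The plan is to extend the proof of Theorem \ref{Fermat} from $k=1$ to general $1 \leq k \leq (N-3)/2$ by replacing the appeal to Proposition \ref{cycle and intermediate Jacobian} with the higher-$k$ criterion of Faucette--Otsubo (Corollary 4.9 of \cite{O}). The strategy is a direct descent from $F_N$ to $C_N$ via the unramified $N$-fold cover $\pi\colon F_N \to C_N$, followed by numerical evaluation of a trace-map image.

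First, I would transfer the computation from $C_N$ to $F_N$. The element $\eta_m \in (\wedge^3 H_{\mathcal{O}})'$ of $C_N$ pulls back to an analogous triple wedge of the forms $\omega^{a,b}$ on $F_N$ via $\pi^*\eta^{ha,hb} = \omega^{ha,hb}$. Because $\pi$ is unramified of degree $N$, the identity $I_{\mathcal{O}}(\eta_m) = N\, I_{\mathcal{O}}(\pi^{\ast}\eta_m) \bmod \mathcal{O}$ already noted in the paper reduces the harmonic-volume computation on $C_N$ to one on $F_N$, where Otsubo's machinery is available. The Lemma preceding Theorem \ref{Fermat2} verifies that the index triples $(a_i,b_i)=(1,m),(m,m^2),(m^2,1)$ meet Assumption 4.4 of \cite{O}, so Otsubo's Theorem 3.7 applies and yields exactly the sum $N^6 \sum_{h} \int_{\kappa_0}\omega^{h,hm}\omega^{hm,hm^2}$ appearing in $f(N,1)$.

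Second, I would invoke Corollary 4.9 of \cite{O}, which is the higher-$k$ analogue of Harris's obstruction: for $1\leq k\leq (N-3)/2$ and under the algebraic hypothesis already verified, if $W_k-W_k^{-}$ is algebraically equivalent to zero in $J$, then the corresponding image under $\mathrm{Tr}\circ 2I_{\mathcal{O}}$ must vanish modulo $\Z$, and moreover the higher-$k$ obstruction equals $k!\,N^{4k-4}$ times the $k=1$ obstruction. Combining this with the explicit value of $f(N,1)$ from Theorem \ref{Fermat} yields
\[
\mathrm{Tr}\circ 2I_{\mathcal{O}}\text{-obstruction for }W_k-W_k^{-}\ \equiv\ k!\,N^{4k-4} f(N,1) \;=\; f(N,k) \pmod{\Z}.
\]
Taking the contrapositive: if $f(N,k)\notin\Z$, then $W_k-W_k^{-}$ is not algebraically equivalent to zero in $J(C_N)$.

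The main obstacle is verifying that the setup of Corollary 4.9 of \cite{O} transfers cleanly through the cover $\pi$, in particular that the rescaling factor $k!\,N^{4k-4}$ emerges correctly after one divides by $N$ in the pullback relation and takes the trace. The bookkeeping splits into two pieces: the Hodge-theoretic condition that $\eta_m$ lies in $\wedge^3 H \cap (\wedge^3 H^{1,0}+\wedge^3 H^{0,1})$ of $C_N$ (immediate, since each factor $N\eta^{a,b}$ is holomorphic), and the arithmetic condition that the algebraic ``good condition'' invoked in Faucette--Otsubo remains valid after descent; this follows because $\pi$ is $G$-equivariant and unramified, so the Jacobian embedding and symmetric-product structure are compatible with $\pi_*$. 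Once these compatibilities are confirmed, the theorem is a direct consequence.
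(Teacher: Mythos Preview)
Your proposal is correct and follows essentially the same route as the paper: the paper's entire argument for Theorem~\ref{Fermat2} is the one-line remark ``Using Corollary 4.9 in \cite{O}'' together with the definition $f(N,k)=k!\,N^{4k-4}f(N,1)$, and you have simply unpacked this citation. The transfer from $C_N$ to $F_N$ and the verification of Assumption~4.4 that you sketch are already carried out in the paper prior to the statement of Theorem~\ref{Fermat}, so they are not new obstacles for the passage from $k=1$ to general $k$; once $f(N,1)$ is in hand, Corollary~4.9 of \cite{O} supplies the multiplier $k!\,N^{4k-4}$ directly.
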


We show the table of the computation of $f(N,1)$
and Mathematica program \cite{W} of $f(N,k)$.
\begin{center}
 \begin{figure}[htbp]
 \begin{minipage}[cbt]{.25\textwidth}
  $\begin{array}{r|r|l}
 N & m & f(N,1)\\ \hline
7& 2& 0.64692\\
13& 3& 0.30390\\
19& 7& 0.15972\\
31& 5& 0.68272\\
37& 10& 0.53833\\
43& 6& 0.94719\\
61& 13& 0.10498\\
67& 29& 0.67834\\
73& 8& 0.67715\\
79& 23& 0.70081\\
97& 35& 0.67120\\
103& 46& 0.20164\\
109& 45& 0.21967\\
127& 19& 0.75140\\
139& 42& 0.89455\\ 
151& 32& 0.20776\\
157& 12& 0.65104\\
163& 58& 0.47898\\
181& 48& 0.68643\\ 
193& 84& 0.65697\\
199& 92& 0.53788\\
211& 14& 0.92477\\
223& 39& 0.14653\\
229& 94& 0.48453\\ 
241& 15& 0.77552\\
271& 28& 0.95322\\
277& 116& 0.88313
 \end{array}$
 \end{minipage}
 \hspace*{.05\textwidth}
 \begin{minipage}[cbt]{.25\textwidth}
  $\begin{array}{r|r|l}
 N & m & f(N,1)\\ \hline
283& 44& 0.97789\\
307& 17& 0.66173\\
313& 98& 0.96320\\
331& 31& 0.88040\\
337& 128& 0.61843\\
349& 122& 0.57242\\
367& 83& 0.70289\\
373& 88& 0.55905\\
379& 51& 0.13144\\
397& 34& 0.54575\\
409& 53& 0.59176\\
421& 20& 0.86406\\
433& 198& 0.085557\\
439& 171& 0.20173\\
457& 133& 0.055143\\
463& 21& 0.24695\\
487& 232& 0.82059\\
499& 139& 0.89265\\
523& 60& 0.12188\\
541& 129& 0.20975\\
547& 40& 0.13131\\
571& 109& 0.86328\\
577& 213& 0.83477\\
601& 24& 0.16953\\
607& 210& 0.27883\\
613& 65& 0.91661\\
619& 252& 0.91440
 \end{array}$
 \end{minipage}
 \hspace*{.05\textwidth}
 \begin{minipage}[cbt]{.25\textwidth}
  $\begin{array}{r|r|l}
 N & m & f(N,1)\\ \hline
631& 43& 0.50662\\
643& 177& 0.72852\\
661& 296& 0.43828\\
673& 255& 0.20495\\
691& 253& 0.58775\\
709& 227& 0.79285\\
727& 281& 0.33854\\
733& 307& 0.12451\\
739& 320& 0.44354\\
751& 72& 0.78711\\
757& 27& 0.10544\\
769& 360& 0.62163\\
787& 379& 0.10082\\
811& 130& 0.17690\\
823& 174& 0.22898\\
829& 125& 0.86872\\
853& 220& 0.57350\\
859& 260& 0.89417\\
877& 282& 0.70117\\
883& 337& 0.26719\\
907& 384& 0.49691\\
919& 52& 0.47589\\
937& 322& 0.94337\\
967& 142& 0.71751\\
991& 113& 0.94086\\
997& 304& 0.79227
 \end{array}$
 \end{minipage}
\caption{Table of the $f(N,1)$}
\label{table}
\end{figure}

\end{center}

The table \ref{table} shows that
the algebraic cycle $C_N-C_N^{-}$
is not algebraically equivalent to zero
in $J(C_N)$ for $N< 1000$ satisfying
the condition.

\begin{figure}[htbp]
\begin{center}
\includegraphics[width=1\textwidth]{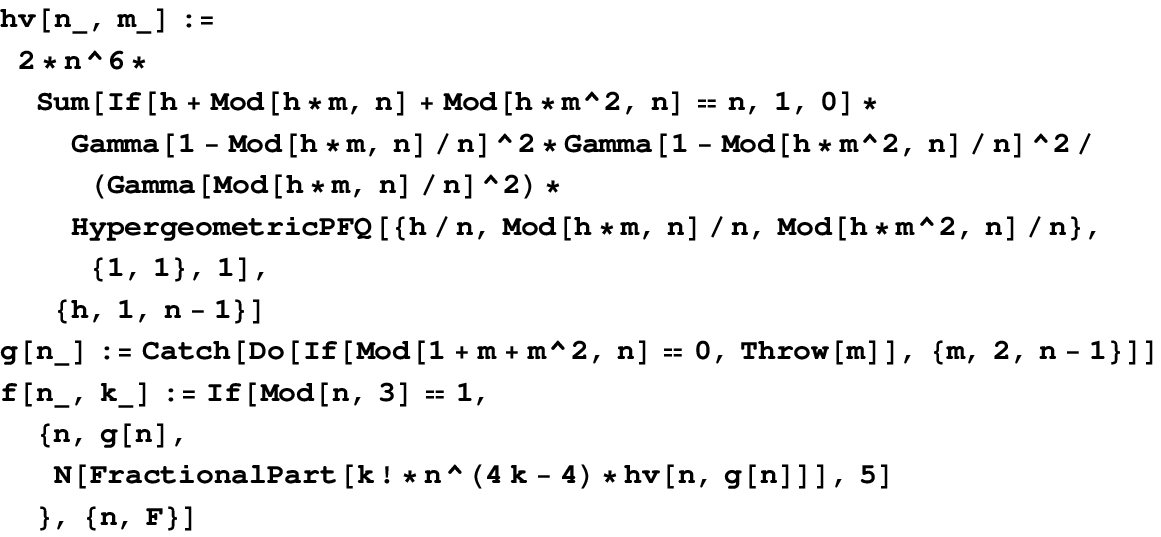}
\caption{Numerical calculation program of $f(N,k)$}
\label{klein-program}
\end{center}
\end{figure}

\newpage

\end{document}